 \newtheorem{theorem}{Theorem}[section]
 \newtheorem{definition}[theorem]{Definition}
 \newtheorem{lemma}[theorem]{Lemma}
 \newtheorem{remark}[theorem]{Remark}
 \newtheorem{pro}[theorem]{Proposition}
\title{$LlogL$-integrability of the velocity gradient for Stokes system with drifts in $L_\infty (BMO^{-1})$}
 \author{J.~Burczak\footnote{Institute of Mathematics, Polish Academy of Sciences,   \'Sniadeckich 8, 00-656 Warsaw, Poland and Mathematical Institute, University of Oxford, UK, emails: jb@impan.pl and  burczak@maths.ox.ac.uk }, \; G.~Seregin\footnote{Oxford University, UK and St Petersburg Department of Steklov Mathematical Institute, RAS, Russia, email: seregin@maths.ox.ac.uk}
}
\date{}
\begin{document}
\maketitle
\def\Xint#1{\mathchoice
 {\XXint\displaystyle\textstyle{#1}}%
 {\XXint\textstyle\scriptstyle{#1}}%
{\XXint\scriptstyle\scriptscriptstyle{#1}}%
 {\XXint\scriptscriptstyle\scriptscriptstyle{#1}}%
\!\int}
\def\XXint#1#2#3{{\setbox0=\hbox{$#1{#2#3}{\int}$}
 \vcenter{\hbox{$#2#3$}}\kern-.5\wd0}}
 \def\ddashint{\Xint=}
 \def\dashint{\Xint-}

\begin{abstract}
\noindent
For any weak solution of the Stokes system with drifts in $L_\infty (BMO^{-1})$, we prove a reverse H\"older inequality and  $LlogL$-higher integrability of the velocity gradients. 
\end{abstract}

\setcounter{equation}{0}
\section{Introduction}

Let us consider the following $3$D Stokes system with drift
\begin{equation}\label{os1}
\begin{aligned}
\partial_t v + b \cdot \nabla v - \Delta v + \nabla q &= 0, \\
  {\rm div}\, v &= 0,
\end{aligned}
\end{equation}
where $b$ is a given vector field and $v$ and $q$ are unknown velocity field and pressure.

Our interest in \eqref{os1} is related to possible regularity improvements in the Navier-Stokes borderline case $b \in L_\infty ( BMO^{-1})$, at least in the size of a possible singular set. Hence we assume throughout this note that 
\begin{equation}\label{eq:sol}
{\rm div}\, b = 0.
\end{equation}
There are different definitions of the space $BMO^{-1}$, see for example Koch \& Tataru \cite{KochTat01}. In our $3$D case, it is convenient to use the following one: there exists a tensor $d\in BMO$ such that \begin{equation}
	\label{bmodef}
b={\rm div}\,d\end{equation}
 in the sense of distributions, while condition (\ref{eq:sol}) implies its skew-symmetry. Equivalently, there exists a divergence free field $\omega\in BMO$ such that $b={\rm rot}\,\omega$. Then $d_{ij}=\epsilon_{ijk}\omega_k $, where $(\epsilon_{ijk})$ is the Levi-Civita tensor.

 The relationship between $b$ and $d$ shows that  one may recast \eqref{os1} as a generalised Stokes system with the main part $A = Id + D$, where $D=(D_{ijkl})$ with $D_{ijkl}=\delta_{ik}d_{jl}\in L_\infty(BMO)$. A general $A \in L_\infty(BMO)$  is naturally too rough even to define a standard weak solution. But here skew-symmetry comes again to our aid. Namely, we have the following estimate  
 \begin{equation}\label{MazVer}
 	 \int\limits_{\mathbb R^n} (D \nabla u): \nabla v \, dx\leq c\|d\|_{BMO}\|\nabla u\|_2\|\nabla v\|_2
 	 \end{equation} 
 for any $u,v\in C^\infty_{0}(\mathbb R^3)$, which can be deduced from the results of
Maz'ya \& Verbitsky  \cite{MazVer06}. A related discussion may be found in Silvestre, \v{S}ver\'ak, Zlato\v{s} \& coauthor
  \cite{SSSZ}. We give a straightforward proof of (\ref{MazVer}) in the Appendix I for completeness.

It is important to keep in mind that over the entirety of this note, while we refer to $b \in L_\infty ( BMO^{-1})$ satisfying (\ref{eq:sol}), we automatically consider (\ref{bmodef}) with the related $D$.

Among other interesting cases, in which the system  (\ref{os1}) plays an important  part, there is the question about potential Type I blowup of solutions to the Navier-Stokes system, compare the recent paper \cite{SchSer2017} by Schonbek  \& coauthor about a Liouville-type theorem via duality.

For the account of the achievable regularity results for the scalar version of the problem \eqref{os1} with the structural restriction (\ref{eq:sol})  but with no pressure, i.e.
$$
\partial_t u + b \cdot \nabla u - \Delta u= 0, \qquad {\rm div}\, b = 0,
$$
we refer to \cite{SSSZ}. The essence of its results reads: among $L_\infty (X)$ spaces for $b$, $X=BMO^{-1}$ is the widest one, where local `deep' regularity results for $u$ are available (e.g. Harnack inequality) and the choice of $BMO^{-1}$ is close to being sharp. See also Nazarov \& Ural'tseva 
\cite{NazUra11} for $b$ in space-time Morrey spaces on the same scale and Liskevich \& Zhang 
\cite{LisZha04} for similar results under a `form boundedness assumption' on $b$. One should in addition mention Friedlander \& Vicol 
\cite{FriVic11}, where H\"older continuity of solutions to the related Cauchy problem was proved, with $b \in L_\infty (BMO^{-1})$.

In relation to the full system (\ref{os1}-\ref{eq:sol}), the current best result for the associated Cauchy problem is Silvestre \& Vicol 
\cite{SilVic12}. The authors show for $b \in L_p (M^\beta)$, a Lebesgue-Morrey scale of spaces, that there exists a $C (C^\alpha)$ solution. However, for the endpoint of this scale i.e. for $L_\infty (M^{-1})$, $M^{-1} \supset L^3$,  in order to conclude with the same result, an additional smallness assumption is needed (which is automatically satisfied for $C(L^3)$, but not for $L_\infty(L^3)$). For  the local setting, we refer to Zhang 
\cite{Zha04}, where $b$ must belong to a certain Kato class.

Let us conclude with two remarks. Firstly, as already seen above, for a scale of spaces, the regularity results in the endpoint case  $L_\infty (X)$ are substantially more difficult and even likely not always to hold. Secondly, the result of Escauriaza, \v{S}ver\'ak \& coauthor  \cite{EscSerSve03}, where $ b = v \in L_\infty(L_3)$ suffices to obtain regularity, utilises essentially the nonlinear structure. Hence to study regularity of solutions to (\ref{os1}) with   \eqref{eq:sol}, even with $L_\infty (L_3)$, one needs different ideas.

\setcounter{equation}{0}
\section{Main Results}

We write $B(x_0,R)$ for the ball with radius $R$ centred at $x_0 \in \mathbb R^3$. $Q(z_0,R) = B (x_0,R) \times (t_0 - R^2, t_0)$ is the (parabolic) cylinder with its centre $z_0 = (x_0, t_0)$, where $t_0 \in \mathbb R$. 
For an open set $ \Omega \subset \mathbb R^3$ and an interval $]T_1, T_2[$, we write $Q_{T_1,T_2} =  \Omega \times ]T_1, T_2[
$. 

We will use standard function spaces: $L_\infty(]T_1,T_2[\,; L_2 (\Omega)) = L_{2, \infty} (Q_{T_1,T_2})$, $ W_2^{1,0} (Q_{T_1,T_2})=\{v,\nabla v\in L_2(Q_{T_1,T_2})\}$, etc. 

In what follows we always adopt the following convention
\begin{equation}\label{eq:gam}
\Gamma (z,\rho) = \| b \|_{L_\infty ({t - \rho^2, t};  BMO^{-1} (B (x,\rho) ))} = \| d \|_{L_\infty ({t -  \rho^2, t};  BMO (B (x,\rho ))},
\end{equation}
where $d$ is related with $b$ via (\ref{bmodef}). Naturally, the right-hand side of (\ref{eq:gam}) is merely a seminorm for $d$, but the right-hand side is a proper norm for $b$, see e.g. \cite{KochTat01}.

Where there is no danger of confusion, we may sometimes suppress certain indices.

\begin{definition}[Weak solution] \label{sws} Let us fix a space-time domain $Q_{T_1,T_2}$.
A pair $v=(v_i)$ and $q$ is a weak solution to (\ref{os1}) on $Q_{T_1,T_2}$ if and only if
\begin{itemize}
\item[(i)] $v \in L_{2, \infty} (Q_{T_1,T_2}) \cap  W_2^{1,0} (Q_{T_1,T_2})$ \quad and \quad $q \in L_2  (Q_{T_1,T_2})$;
\item[(ii)] $v$ and $q$ satisfy  (\ref{os1}) in the sense of distributions on $Q_{T_1,T_2}$.
\end{itemize}
\end{definition}

\begin{remark}\label{1strem}
The regularity classes appearing in Definition \ref{sws}, in particular $L_2$ for the pressure $q$, agree with the existence result for the Cauchy problem for ({\ref{os1}}) with a solenoidal drift $b \in L_\infty ( BMO^{-1})$, see Appendix II.
\end{remark}

\begin{remark}\label{ee}
Any weak solution to (\ref{os1}-\ref{eq:sol}) on $Q_{T_1,T_2}$ satisfies the following local energy identity
	$$\int\limits_{\Omega}\varphi|v(x,t)|^2dx+2\int\limits^t_0\int\limits_{\Omega}\varphi|\nabla v|^2dxdt'= $$$$= \int\limits^t_0\int\limits_{\Omega}
	(|v|^2(\partial_t+\Delta) \varphi-2D\nabla v:v\otimes \nabla \varphi+2qv\cdot\nabla\varphi)dxdt' $$
	for any $t \in \, ]T_1, T_2[$ and any non-negative $\varphi\in C^\infty_0(Q_{T_1, T_2 +1})$.
\end{remark}
The above remark  follows from (\ref{MazVer}) and standard duality arguments. Observe that it renders a notion of a suitable weak solution redundant in our setting.

Our first result is as follows. 
\begin{pro}\label{rh}
For any $l \in\, ]6/5, 2[$, any weak solution $v$ and $q$ to (\ref{os1}-\ref{eq:sol}) on $Q_{T_1,T_2}$ satisfies 
$$\frac 1{|Q(\rho)|} \int\limits_{Q (z_0,\rho)} |\nabla v|^2  dz \leq $$
\begin{equation}\label{rhin}
\leq C(l) ( \Gamma^5 (z_0,2\rho) +1) \bigg( \frac 1{|Q(2\rho)|} \int\limits_{Q (z_0,2\rho)} |\nabla v |^l dz \bigg)^\frac{2}{l} +\end{equation}
$$+ C \bigg(\frac 1{|Q(2\rho)|} \int\limits_{Q(z_0,2\rho)} |q |  \, dz\bigg)^2$$
on any $Q(z_0,2\rho) \subset Q_{T_1,T_2}$,
with constants $C(l)$ and $C$.
\end{pro}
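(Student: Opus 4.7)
I would derive \eqref{rhin} from the local energy identity of Remark~\ref{ee} by controlling the drift term through \eqref{MazVer} and closing the estimate via a parabolic Poincar\'e--Sobolev inequality at exponent $l\in\,]6/5,2[$. A convenient preliminary observation: since $\mathrm{div}\,b=0$, for any constant $c\in\mathbb R^3$ the shift $v-c$ solves \eqref{os1} with the same pressure $q$, so the identity of Remark~\ref{ee} applies with $v-c$ in place of $v$. I pick a non-negative parabolic cutoff $\varphi=\eta^2(x)\chi(t)$ supported in $Q(z_0,2\rho)$, with $\eta\equiv 1$ on $B(x_0,\rho)$, $\chi\equiv 1$ on $(t_0-\rho^2,t_0]$, and the standard bounds $|\nabla\eta|\lesssim \rho^{-1}$, $|\nabla^2\eta|+|\chi'|\lesssim \rho^{-2}$. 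Inserting $\varphi$ into the $v-c$ version of the energy identity and discarding the non-negative boundary term yields
\[
\int_{Q(z_0,\rho)}|\nabla v|^2\,dz\;\lesssim\;\frac{1}{\rho^2}\int_{Q(z_0,2\rho)}|v-c|^2\,dz+|I_D|+|I_q|,
\]
where $I_D=\iint D\nabla v:(v-c)\otimes\nabla\varphi\,dz$ and $I_q=\iint(q-\bar q(t))(v-c)\cdot\nabla\varphi\,dz$; the mean subtraction $q\to q-\bar q(t)$ is admissible because $\int v\cdot\nabla\varphi\,dx=0$ by $\mathrm{div}\,v=0$.

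For $I_D$, the identity $(v-c)\otimes\nabla\varphi=\nabla((v-c)\varphi)-\varphi\nabla v$ combined with the pointwise vanishing $D\nabla v:\nabla v=d_{jl}\,\partial_l v_i\,\partial_j v_i=0$ (skew-symmetry of $d_{jl}$ against the symmetric tensor $\partial_l v_i\,\partial_j v_i$) lets one rewrite $I_D=\iint D\nabla v:\nabla((v-c)\varphi)\,dz$. After introducing an outer cutoff $\tilde\eta\equiv 1$ on $\mathrm{supp}\,\eta$ and extending $d$ from $B(x_0,2\rho)$ to $\mathbb R^3$ with comparable BMO seminorm, applying \eqref{MazVer} pointwise in $t$ and Cauchy--Schwarz in time gives
\[
|I_D|\;\lesssim\;\Gamma(z_0,2\rho)\,\|\nabla((v-c)\tilde\eta)\|_{L_2(Q(2\rho))}\,\|\nabla((v-c)\varphi)\|_{L_2(Q(2\rho))}.
\]
Expanding both gradients and using Young's inequality, together with a standard hole-filling iteration over intermediate radii between $\rho$ and $2\rho$, absorbs the emerging $\int|\nabla v|^2$-contributions into the left-hand side. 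The pressure term $I_q$ is bounded separately, producing the pressure contribution stated in \eqref{rhin} together with further absorbable $|v-c|^2$ terms. Choosing $c$ as the space-time mean of $v$ on $Q(z_0,2\rho)$ and applying the parabolic Poincar\'e--Sobolev inequality
\[
\bigg(\frac{1}{|Q(2\rho)|}\int_{Q(z_0,2\rho)}|v-c|^2\,dz\bigg)^{1/2}\leq C(l)\,\rho\bigg(\frac{1}{|Q(2\rho)|}\int_{Q(z_0,2\rho)}|\nabla v|^l\,dz\bigg)^{1/l},
\]
valid in dimension three precisely for $l\in\,]6/5,2[$, converts the $\rho^{-2}\iint|v-c|^2$ remainder into the desired $L_l$-gradient average and produces \eqref{rhin}.

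\paragraph{Main obstacle.} The hard part is the drift term: \eqref{MazVer} is quadratic and non-local in $\|\nabla v\|_{L_2}$, so its absorption into the left-hand side relies on a delicate combination of the pointwise identity $D\nabla v:\nabla v=0$ with the cutoff splitting of $\nabla((v-c)\varphi)$. Moreover, \eqref{MazVer} is stated at the $L_2$-scale while \eqref{rhin} demands the lower $L_l$-scale on the right-hand side; bridging this gap forces several cascaded Young's inequalities, each contributing an extra power of $\Gamma$, with the threshold $l>6/5$ dictated by the 3D Sobolev embedding. Careful bookkeeping of these powers is what amplifies the bare $\Gamma^2$ Caccioppoli factor into the final $\Gamma^5$ in \eqref{rhin}.
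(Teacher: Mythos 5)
Your overall scaffolding (a Caccioppoli-type estimate from the local energy identity, subtraction of a mean from $v$, iteration over radii) is in the same spirit as the paper's, but two of your key steps fail. First, the drift term. Rewriting $I_D=\iint D\nabla v:\nabla((v-c)\varphi)\,dz$ and invoking \eqref{MazVer} moves in the wrong direction: \eqref{MazVer} returns $c\,\Gamma(z_0,2\rho)\,\|\nabla v\|_{L_2(Q(2\rho))}\,\|\nabla((v-c)\varphi)\|_{L_2}$, whose leading part is bounded \emph{below} by $c\,\Gamma(z_0,2\rho)\int_{Q(z_0,\rho)}|\nabla v|^2\,dz$, i.e.\ an arbitrary (not small) multiple of the very quantity you are estimating, and in fact by $c\,\Gamma\int_{Q(z_0,2\rho)}|\nabla v|^2\,dz$ over the full larger cylinder, not merely the annulus. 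Neither Young's inequality nor hole-filling nor the Giaquinta-type iteration lemma can absorb a term $\delta\int_{Q(2\rho)}|\nabla v|^2$ with $\delta\geq 1$, so the estimate does not close. The paper never applies \eqref{MazVer} inside the local estimate (it is used only to justify the energy identity and the existence theory); instead the drift term is kept in the form $\bar d_{jl}v_{i,l}\hat v_i(\eta^2)_{,j}$ --- one full gradient against one lower-order factor $\hat v\,\nabla\eta^2$ --- and is estimated by H\"older with $\bar d\in L_{s'}$, $s'\in(6,\infty)$, via John--Nirenberg. This produces $\Gamma\,\|\nabla v\|_{L_2}\,\|\hat v\|_{L_{2s/(2-s)}}$, and the second factor is then interpolated between $\sup_t\|\hat v(\cdot,t)\|_{L_2}$ and $\|\nabla v\|_{L_l}$; the power $\Gamma^5$ arises from a single Young inequality with exponents $4/3$ and $4$ applied to $\Gamma^{5/4}A^{3/4}L^{1/2}$, not from a cascade.

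Second, the Poincar\'e step. The inequality you invoke, $\|v-c\|_{L_2(Q(2\rho))}\lesssim\rho\,(\ldots)\,\|\nabla v\|_{L_l(Q(2\rho))}$ with $c$ the \emph{space-time} mean and only the spatial gradient on the right, is false for general $v\in W^{1,0}_l$: take $v=f(t)$, so that $\nabla v\equiv 0$ while $v-c\not\equiv 0$. The threshold $l>6/5$ indeed comes from the slice-wise embedding $W^1_l\hookrightarrow L_2$ in three dimensions, but replacing the time-dependent spatial mean by a constant requires control of the time oscillation of that mean, i.e.\ information on $\partial_t v$ drawn from the equation. This is precisely why you must \emph{not} discard the ``boundary term'' $\sup_t\int|\hat v(\cdot,t)|^2\eta^2$: the paper keeps it on the left of the Caccioppoli inequality, bounds it by the right-hand side, and then feeds it into the multiplicative inequality \eqref{hs}, which is what legitimately converts the lower-order norm of $\hat v$ into $\|\nabla v\|_{L_l}$. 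Finally, your pressure treatment is too thin to reach the stated conclusion: obtaining the $L_1$-average $\big(\frac{1}{|Q(2\rho)|}\int_{Q(2\rho)}|q|\,dz\big)^2$ rather than the weaker $\frac{1}{|Q(2\rho)|}\int_{Q(2\rho)}|q|^2\,dz$ requires the decomposition $q=q_G+q_H$, a dual estimate for $q_G$ in $L_{2s/(3s-2)}$, and the interior $L_\infty$--$L_1$ estimate for the harmonic remainder $q_H$, none of which your mean-subtraction sketch supplies.
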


A simple consequence of Proposition \ref{rh} is as follows.

\begin{remark}\label{liu}
Let $b \in L_\infty (\mathbb R;  BMO^{-1} (\mathbb R^3))$ satisfy (\ref{eq:sol}). Then any weak solution to (\ref{os1}) on $\mathbb R^3 \times \mathbb R$ vanishes.
\end{remark}
Indeed, let  $\Gamma_\infty = \| b \|_{L_\infty (\mathbb R;  BMO^{-1} (\mathbb R^3 ))}$,  $h= |\nabla v |^s$ and $M$ denote the (centred) maximal function with respect to parabolic cylinders (they satisfy the `doubling' assumptions on families of open sets, needed to provide the usual maximal function theory, compare Stein \cite{Stein93},  \S I.1).  Proposition \ref{rh} gives
$$
M (h^\frac{2}{s}) (z) \leq C(s, \Gamma_\infty )M^\frac{2}{s}   (h) (z) +C M^2 (q ) (z) .
$$
The strong $L_p$ estimates for $M$ imply
$$
\int\limits_{\mathbb R^4} M (h^\frac{2}{s})\,  dz  \leq C(s, \Gamma_\infty ) \int\limits_{\mathbb R^4} h^\frac{2}{s}  dz  +C \int\limits_{\mathbb R^4} |q|^2 dz=$$$$ = C(s, \Gamma_\infty ) \int\limits_{\mathbb R^4} |\nabla v |^2 dz+C  \int\limits_{\mathbb R^4}  |q |^2 dz  \leq C.
$$
This means that both $M (h^\frac{2}{s})$ and $h^\frac{2}{p}$ are integrable. On the full space it yields that $h^\frac{2}{s} \equiv 0$, compare \cite{Stein93}, \S I.8.14. Therefore $v$ can only be time-dependant, but then our assumption $v \in L_{2, \infty}$ implies $v \equiv 0$.

Our main result reads
\begin{theorem}\label{log}
Let $b$ satisfy (\ref{eq:sol}). Then, there exists a number $C$, such that any weak solution $v$ and $q$ to (\ref{os1}) in $Q_{T_1,T_2}$ satisfies
$$
 \int\limits_{Q(z_0,r)} |\nabla v|^2 \log \bigg( 1+ \frac{|\nabla v|^2}{ (|\nabla v|^2)_{z_0,r}} \bigg) dz \leq$$$$ \leq C  (1+ \Gamma^5 (z_0,5r) ) \int\limits_{Q(z_0,5r)} |\nabla v|^2 dz + C  \int\limits_{Q(z_0,5r)}   |q |^2 \, dz
$$
for any $Q (z_0, 5r) \Subset Q_{T_1,T_2}$.

Here, $(f)_{z_0,r}$ is the mean value of function $f$ over the parabolic cylinder $Q(z_0,r)$.
\end{theorem}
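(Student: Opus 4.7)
The plan is to reinterpret Proposition \ref{rh} as a pointwise bound on a localised parabolic maximal function of $|\nabla v|^2$, and then couple this with the Hardy--Littlewood $L^p$-theorem and a classical Stein-type $L\log L$ inequality. This route avoids any higher-integrability assumption on $q$ and reduces the desired bound to an $L^2$-bound for $\nabla v$ and for $q$ on $Q(z_0,5r)$, with the dependence on $\Gamma$ coming in only at the first (pointwise) step.

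Write $h=|\nabla v|$ and let $M$ denote the centered parabolic maximal operator from Remark \ref{liu}, restricted to cylinders $Q(z,\tilde\rho)$ contained in $Q(z_0,5r)$. First I would fix $z\in Q(z_0,r)$ and $\rho$ with $Q(z,2\rho)\subset Q(z_0,5r)$, apply Proposition \ref{rh} on $Q(z,\rho)$, and use the monotonicity of the $BMO$ seminorm in the enclosing ball to replace $\Gamma(z,2\rho)$ by $\Gamma(z_0,5r)$. Passing to the supremum in $\rho$ on both sides should then yield, at almost every $z\in Q(z_0,r)$,
$$M(h^{2})(z)\le C(\Gamma)\,[M(h^{l})(z)]^{2/l}+C\,[M(|q|)(z)]^{2},$$
with $C(\Gamma)=C(l)(\Gamma^{5}(z_0,5r)+1)$. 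Integrating this over $Q(z_0,r)$ and invoking the Hardy--Littlewood $L^p$-boundedness of $M$ at the exponents $p=2/l>1$ and $p=2$ (using $\int (Mh^{l})^{2/l}\,dz=\|Mh^{l}\|_{L^{2/l}}^{2/l}$) gives
$$\int_{Q(z_0,r)}M(h^{2})\,dz\le C(\Gamma)\int_{Q(z_0,5r)}|\nabla v|^{2}\,dz+C\int_{Q(z_0,5r)}|q|^{2}\,dz.$$

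To close the argument I would appeal to the classical Stein-type estimate
$$\int_{Q(z_0,r)}h^{2}\log\!\Bigl(1+\frac{h^{2}}{(h^{2})_{z_0,r}}\Bigr)dz\le C\int_{Q(z_0,r)}M(h^{2})\,dz,$$
which follows from a parabolic Calder\'on--Zygmund stopping-time decomposition of $h^2$ on $Q(z_0,r)$ at levels $\lambda>(h^{2})_{z_0,r}$: the stopping cubes $Q_{j}^{\lambda}$ satisfy $\bigcup_{j}Q_{j}^{\lambda}\subset\{M(h^{2})>c\lambda\}\cap Q(z_0,r)$ and $(h^{2})_{Q_{j}^{\lambda}}\le c\lambda$, hence $\Phi_{r}(\lambda):=\int_{\{h^{2}>\lambda\}\cap Q(z_0,r)}h^{2}\,dz\le c\lambda\,|\{M(h^{2})>c\lambda\}\cap Q(z_0,r)|$; the Fubini identity $\int h^{2}\log^{+}(h^{2}/A)\,dz=\int_{A}^{\infty}\Phi_{r}(\lambda)/\lambda\,d\lambda$ combined with $\log(1+x)\le 1+\log^{+}x$ finishes the derivation. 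Chaining the last two displays yields the theorem. I expect the main technical nuisance to be the careful parabolic (anisotropic) setup of the maximal operator and of the dyadic CZ architecture, together with the bookkeeping that matches the $2\rho$ in Proposition \ref{rh} with the $\tilde\rho$ in the definition of $M$; the doubling property of parabolic cylinders recalled before Remark \ref{liu} makes this routine, with constants depending only on the parabolic dimension.
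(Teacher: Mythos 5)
Your proposal is correct and follows essentially the same route as the paper: Proposition \ref{rh} is upgraded to a pointwise bound on a localised parabolic maximal function of $|\nabla v|^2$, the strong $L_p$ maximal inequality is applied at exponents $2/l>1$ and $2$, and the conclusion is reached via the local Stein $L\log L$ comparison, which the paper isolates as Lemma \ref{mf} and proves by exactly the Calder\'on--Zygmund stopping-time and layer-cake argument you sketch. The only cosmetic differences are that the paper works with non-centred parabolic cubes and a smooth cut-off $\psi$ to pass to the global maximal operator, where you use centred cylinders and restriction; both handle the same bookkeeping absorbed by the factor $5r$.
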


We would like to notice that, in \cite{ChoeYang}, the authors claim even a stronger result about higher integrability of the velocity gradient.

\setcounter{equation}{0}
\section{Proof of Proposition \ref{rh}}
Over this proof, we will refer at certain times to  \cite{Ser08}. Let us thence initially observe, that however it deals with the case $b=v$, all the computations are in fact performed there for (\ref{os1} - \ref{eq:sol}).

For an $x_0 \in \mathbb R^3$ and $r<R$, let $\varphi_{x_0, r, R} (x)$ be a radial nonnegative smooth  space cut-off function, such that
$$
\varphi_{x_0, r, R} \equiv 1 \;\text{ on }\; B (x_0,r), \qquad  \varphi_{x_0, r, R} \equiv 0 \; \text{ outside }\; B(x_0,R),  $$$$  |\nabla^{i} \varphi_{x_0, r, R} | \leq \frac{C_i}{(R-r)^i}.
$$
Let us introduce the related mean value of a function $f$
$$
f_{x_0, r, R} (t) = \int\limits_{ B(x_0, R)} f (x,t) \varphi^2_{x_0, r, R} (x) \, dx \, \bigg(\,  \int\limits_{ B(x_0,R)}  \varphi^2_{x_0, r, R} (x) \,dx\bigg)^{-1}.
$$
We will also need  a smooth nonnegative time cut-off function $\chi_{t_0, r, R} (t)$ with the following properties
$$
\chi_{t_0, r, R}(t) \equiv 1 \;\text{ for }\; t \leq t_0-R^2,   \qquad  \chi_{t_0, r, R}(t) \equiv 0 \;\text{ for }\; t \geq t_0-r^2, $$$$  |\partial_t \chi_{t_0, r, R}(t) | \leq \frac{C}{R^2-r^2} \leq \frac{2C}{(R-r)^2}.
$$
Together, let us write for brevity
$$
\eta_{z_0, r, R}(x,t) = \chi_{t_0, r, R}(t) \; \varphi_{x_0, r, R}(x).
$$
Finally, for a function $f$ let us denote the oscillations at $z = (x,t)$ as follows
$$
\hat f (z)= f (z) - f_{x_0, r, R} (t), \qquad  \bar f (z) = f (z) - [f]_{x_0,R} (t),
$$
where $[f]_{x_0,R}$ is the mean value of $f$ over the ball $B(x_0,R)$.

Keeping in mind Remark \ref{ee}, it is straightforward to conclude that Lemma 2.1 of \cite{Ser08} (compare also Lemma 2.3 of of \cite{SSSZ}) holds in our case in the following form.
\begin{lemma}\label{CaccIneq}
Let $b \in {L_\infty (T_1, T_2;  BMO^{-1} (\Omega))}$ satisfy (\ref{eq:sol}). Consider any weak solution $v$ and  $q$ of (\ref{os1}) on $Q_{T_1,T_2}$. Let $Q(z_0,R) \Subset Q_{T_1,T_2}$. Then for any $t \in (t_0 - R^2, t_0)$
$$\frac{1}{2} \int\limits_\Omega |\hat v (x,t)|^2 \eta^2_{z_0, r, R} (x,t)  \,dx + \int\limits_{t_0-R^2}^{t} \int\limits_\Omega |\nabla v|^2   \eta^2_{z_0, r, R}  \,dxdt' \leq$$
\begin{equation}\label{inequality}
\leq\int\limits_{t_0-R^2}^{t}  \int\limits_\Omega \bigg( \frac{1}{2}  |\hat v|^2 (\Delta +\partial_{t'}) \eta^2_{z_0, r, R}  -  \bar d_{jl}  v_{i,l} \hat v_i  ( \eta^2_{z_0, r, R} )_{,j}  \,q \hat v  \cdot \nabla  \eta^2_{z_0, r, R}   \bigg) dx dt',
 \end{equation}

\end{lemma}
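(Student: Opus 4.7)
The target is an energy identity (then relaxed to an inequality) for the oscillation $\hat v = v - v_{x_0,r,R}(t')$, localized by the cutoff $\eta^2_{z_0,r,R}$. Since $v_{x_0,r,R}$ depends only on $t'$, one has $\nabla \hat v = \nabla v$, $\mathrm{div}\,\hat v = 0$, and $\hat v$ satisfies
$$
\partial_{t'} \hat v - \Delta \hat v + \nabla q + b\cdot \nabla \hat v = -\partial_{t'} v_{x_0,r,R}(t').
$$
The strategy is to test this equation against $\hat v\,\eta^2_{z_0,r,R}$, integrate over $B(x_0,R)\times (t_0-R^2,t)$, and identify the resulting terms. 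The purely time-dependent forcing on the right vanishes under this pairing, because by the very definition of $v_{x_0,r,R}(t')$ one has $\int \hat v\,\varphi^2_{x_0,r,R}\,dx\equiv 0$, hence $\int \hat v\,\eta^2\,dx\equiv 0$.

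After the usual integrations by parts, the parabolic part produces the left-hand side of (\ref{inequality})---together with the $\tfrac12|\hat v|^2(\partial_{t'}+\Delta)\eta^2$ term on the right---once we integrate in time starting from $t_0-R^2$, where the time cutoff vanishes (this is the natural convention for $\chi_{t_0,r,R}$). The pressure contribution reduces to $\int q\,\hat v\cdot\nabla\eta^2$ by invoking $\mathrm{div}\,\hat v=0$. The delicate piece is the drift: using $\mathrm{div}\,b=0$, it pairs as $-\tfrac12\int|\hat v|^2\,b\cdot\nabla\eta^2$, and since $[d]_{x_0,R}(t')$ is constant in $x$, one has $b = \mathrm{div}\, d = \mathrm{div}\,\bar d$ with $\bar d = d - [d]_{x_0,R}(t')$. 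A further integration by parts in the $l$ variable decomposes the drift into the desired $\int\bar d_{jl}\,v_{i,l}\,\hat v_i\,(\eta^2)_{,j}$ and a leftover $\tfrac12\int\bar d_{jl}|\hat v|^2 (\eta^2)_{,lj}$; the latter vanishes by the skew-symmetry of $\bar d$ contracted against the symmetric Hessian of $\eta^2$. Collecting the four contributions gives the identity underlying (\ref{inequality}), which we state as an inequality for convenience.

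The central technical obstacle is giving these manipulations rigorous meaning under the weak regularity of Definition \ref{sws}. The use of $\hat v\,\eta^2$ as a test function is justified by standard Steklov averaging in $t$, as in \cite{Ser08}; this is the same mechanism used to promote Remark \ref{ee} from a formal computation to a local energy identity. The appearance of $\bar d$ in the $BMO$ class (rather than in a Lebesgue space) is exactly what requires the Maz'ya--Verbitsky bound (\ref{MazVer}): it guarantees that $\int \bar d_{jl}\,v_{i,l}\,\hat v_i\,(\eta^2)_{,j}$ is well defined and is controlled by $\|d\|_{BMO}\|\nabla v\|_2 \|\nabla(\hat v\,\eta^2)\|_2$, which also underpins the later absorption arguments used to derive (\ref{rhin}) from (\ref{inequality}).
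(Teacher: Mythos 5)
Your proposal is correct and follows essentially the same route the paper intends: the paper gives no detailed proof, merely citing Lemma 2.1 of \cite{Ser08} together with Remark \ref{ee}, whose content is exactly the computation you carry out --- testing against $\hat v\,\eta^2_{z_0,r,R}$ (so the purely time-dependent forcing drops by the choice of the $\varphi^2$-weighted mean), handling the drift via $b={\rm div}\,\bar d$ and the skew-symmetry of $\bar d$ against the Hessian of $\eta^2$, and justifying everything through Steklov averaging and the Maz'ya--Verbitsky bound (\ref{MazVer}). You are also right to take the time cutoff to vanish at $t_0-R^2$ (the paper's stated convention for $\chi_{t_0,r,R}$ appears to be reversed; with it as written, an initial-time term would be missing from (\ref{inequality})).
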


Let us assume that $Q(x_0,R_1) \Subset Q_{T_1,T_2}$ with $R<R_1$ fixed. Recall that by definition $\Gamma (z_0, R_1) = \| d \|_{L_\infty (t_0-R_1^2, t_0; \, BMO (B(x_0,R_1))}$. Identically as in \cite{Ser08} its Lemma 2.1 implies (2.7) there, we obtain from (\ref{CaccIneq}) that for any $s \in (1,6/5)$
$$\sup\limits_{t \in \, ]t_0-R^2, t_0[} \; \frac{1}{2} \int\limits_\Omega |\hat v (x,t)|^2 \eta^2_{z_0, r, R} (x,t) \, dx \, + \int\limits_{t_0-R^2}^{t_0} \int\limits_\Omega |\nabla v|^2   \eta^2_{z_0, r, R} \, dz \le $$$$ \le \frac{C}{R-r} \int\limits_{Q(z_0,R)}  |q| | \hat v| \chi^2_{t_0, r, R}\, \varphi_{x_0, r, R} \, dz \,+ $$
\begin{equation}\label{e1}
+\, C(s) \Big( \frac{\Gamma (z_0, R_1) R^{\frac{3}{s'}}}{R-r} + \frac{R^{1 + \frac{3}{s'}}}{(R-r)^2} \Big)
 \Big( \int\limits_{Q(z_0,R)} |\nabla v|^2 \, dz \Big)^\frac{1}{2} \times$$$$\times\bigg(  \int\limits_{t_0-R^2}^{t_0}  \Big( \int\limits_{B(x_0,R)} |\hat v(x,t)|^\frac{2s}{2-s} \,dx \Big)^\frac{2-s}{s} \,dt \bigg)^\frac{1}{2} \leq 
\end{equation}
$$\leq \frac{C}{R-r} \int\limits_{Q(z_0,R)}  |q| | \hat v| \chi^2_{t_0, r, R}\, \varphi_{x_0, r, R} \, dz + C(s)  \frac{(\Gamma (z_0, R_1) +1) R^{1 + \frac{3}{s'}}}{(R-r)^2}\times $$$$\times \Big( \int\limits_{Q (z_0,R)} |\nabla v|^2 \, dz \Big)^\frac{1}{2} \bigg(  \int\limits_{t_0-R^2}^{t_0}  \Big( \int\limits_{B(x_0,R)} |\hat v(x,t)|^\frac{2s}{2-s} \,dx \Big)^\frac{2-s}{s} \,dt \bigg)^\frac{1}{2}.$$

We deal with the pressure part also in a similar way as in \cite{Ser08}, pp. 332-33. Again, as in the case of (\ref{CaccIneq}), the only difference is our use of a cut-off function between any $r < R$, as opposed to a cutoff between $R$ and $2R$  in \cite{Ser08}. Nevertheless, let us present details for clarity. Since ${\rm div}\, v =0$, (\ref{os1}) implies that for any $\varphi \in C_0^\infty (\Omega)$ and a. a.  $t \in ]T_1,T_2[$
$$
\int\limits_{\Omega} q (x,t) \Delta \varphi(x) \, dx = \int\limits_{\Omega}   \bar d_{jl}  (x,t)   v_{i,l}  (x,t) \,    \varphi_{,ij}(x) \, dx.
$$
Define $q_G$ as the solution to the related very weak homogenous boundary problem in $B(x_0,R_1)$:
$$
\int\limits_{B(x_0,R_1)} q_G (x,t) \Delta \varphi(x) \, dx = \int\limits_{B(x_0,R_1)}   \bar d_{jl}  (x,t)   v_{i,l}  (x,t) \,    \varphi_{,ij} (x)\, dx \qquad $$
for all 
$\varphi \in W^2_{\frac{2s}{2-s}} (B (x_0, R_1))$ satisfying boundary condition   $\varphi(x,t)=0$ as $x\in \partial B(x_0,R_1).
$
The dual estimate implies then for a.a. $t$
\begin{equation}\label{qg}
 \Big( \int\limits_{B(x_0,R_1)} \!\!\!\!\! |q_G (x,t)|^\frac{2s}{3s-2} dx \Big)^\frac{3s-2}{2s} \!\!\! \le C(s) R^\frac{3}{s'}_1 \Gamma (z_0, R_1) \Big( \int\limits_{B(x_0,R_1)} \!\!\!\!\! |\nabla v(x,t)|^2 dx \Big)^\frac{1}{2}
\end{equation}
(compare (2.11) of  \cite{Ser08}). The remainder $q_H = q - q_G$ is harmonic on ${B (x_0,R_1)}$. Since $R<R_1$, we have then
$$
\|q_H (\cdot,t) \|_{L_\infty ({B(x_0,R)}) } \leq \frac{C}{(R_1  - R)^3} \int\limits_{B (x_0,R_1)} |q_H (x,t) | \, dx \leq$$$$\leq  \frac{C}{(R_1  - R)^3} \int\limits_{B(x_0,R_1)} (|q (x,t) |  + |q_G (x,t)|) \, dx.
$$
Use of (\ref{qg}) above implies
\begin{equation}\label{qh1}
\|q_H(\cdot,t) \|_{L_\infty ({B(x_0,R)}) }  \leq  \frac{C}{(R_1  - R)^3} \int\limits_{B(x_0,R_1)} |q (x,t) | \, dx  \, + $$$$+ \, \frac{C(s) \Gamma (z_0, R_1) R_1^\frac{3}{2}}{(R_1  - R)^3} \Big( \int\limits_{B(x_0,R_1)} |\nabla v(x,t)|^2 dx \Big)^\frac{1}{2}.
\end{equation}
We intend to use the above formulas to estimate the pressure part of (\ref{e1}). Before that, since $q=q_G + q_H$, we rewrite it as follows
\begin{equation}\label{pa}
\frac{C}{R-r} \int\limits_{Q(z_0,R)} |q| | \hat v| \chi^2_{t_0, r, R} \varphi_{x_0, r, R} \, dz \leq $$$$\leq \frac{C}{R-r}   \int\limits_{t_0-R^2}^{t_0}   \Big( \int\limits_{B(x_0,R)} |q_G(x,t)|^\frac{2s}{3s-2} dx \Big)^\frac{3s-2}{2s}  \Big( \int\limits_{B(x_0,R)} |\hat v(x,t)|^\frac{2s}{2-s}  dx\Big)^\frac{2-s}{2s} dt \,+$$$$
 +\,  \frac{C}{R-r}  \int\limits_{t_0-R^2}^{t_0} \|q_H (\cdot,t) \|_{L_\infty ({B (x_0,R)}) } \Big(  \int\limits_{B(x_0,R)} | \hat v(x,t) \eta_{z_0, r, R}(x,t)|   dx \Big) dt=$$$$  = I + II.
\end{equation}
We estimate $I$ using (\ref{qg}) 
$$
I \leq   \frac{ C(s) R^\frac{3}{s'}_1 \Gamma (z_0, R_1)}{R-r}    \Big( \int\limits_{Q (z_0,R_1)} |\nabla v |^2 dz \Big)^\frac{1}{2} \times $$$$ \times  \bigg(   \int\limits_{t_0-R_1^2}^{t_0}   \Big( \int\limits_{B(x_0,R)} |\hat v(x,t)|^\frac{2s}{2-s}  dx\Big)^\frac{2-s}{s} dt  \bigg)^\frac{1}{2} 
$$
and $II$ using  (\ref{qh1}) and next the H\"older inequality
$$
  II \leq  \frac{C}{R-r}  \int\limits_{t_0-R^2}^{t_0} \Big( \frac{1}{(R_1  - R)^3} \int\limits_{B(x_0,R_1)} |q(x,t)|  \, dx \Big) \times$$$$\times\Big(  \int\limits_{B(x_0,R)} | \hat v(x,t) \eta_{t_0, r, R}(x,t) | \, dx \Big) dt \, +
  $$
  $$
    +  \frac{C}{R-r}  \int\limits_{t_0-R^2}^{t_0}   \frac{C(s) \Gamma (z_0, R_1) R_1^\frac{3}{2}}{(R_1  - R)^3} \Big( \int\limits_{B (x_0, R_1)} |\nabla v(x,t)|^2 dx \Big)^\frac{1}{2} \times $$$$ \times \Big(  \int\limits_{B(x_0,R)} | \hat v(x,t)|  \, dx \Big) dt  \le
    $$$$
    \leq  \sup_{t \in \, ]t_0-R^2, t_0[}  \Big(  \int\limits_{B(x_0,R)} | \hat v (x,t)|^2 \eta^2_{t_0, r, R} (x,t) \, dx \Big)^\frac{1}{2}   \frac{C}{R-r}   \frac{R^\frac{3}{2}}{(R_1  - R)^3} \int\limits_{Q_{R_1} (z_0)} |q |  \, dz \,+ $$$$
    +  \frac{C}{R-r}  \frac{C(s) \Gamma (z_0, R_1) R_1^\frac{3}{2}}{(R_1  - R)^3} R^{\frac{3}{2} + \frac{3}{s'} }  \Big( \int\limits_{Q (z_0,R_1)} |\nabla v |^2 dz \Big)^\frac{1}{2}\times$$$$\times   \bigg(   \int\limits_{t_0-R_1^2}^{t_0}   \Big( \int\limits_{B(x_0,R)} |\hat v(x,t)|^\frac{2s}{2-s}  dx\bigg)^\frac{2-s}{s} dt  \bigg)^\frac{1}{2}.
$$
Finally, applying the above estimates of $I$ and $II$ to (\ref{pa}), we control the pressure term in (\ref{e1}) and arrive, after absorbing the $\sup$ term into the left-hand side, at
$$
\sup_{t \in \,]t_0-r^2, t_0[} \, \frac{1}{4} \int\limits_{B(x_0,r)} |\hat v (x,t)|^2   dx + \int\limits_{Q(z_0,r)} |\nabla v|^2  dz \leq$$$$\leq
    C(s) (\Gamma (z_0, R_1)+1)  \Big( \frac{R^{1 + \frac{3}{s'}}}{(R-r)^2} +  \frac{ R^\frac{3}{s'}_1}{R-r}  +  \frac{1}{R-r}  \frac{R_1^\frac{3}{2}}{(R_1  - R)^3} R^{\frac{3}{2} + \frac{3}{s'} } \Big)\times $$$$\times\Big( \int\limits_{Q(z_0,R_1)} |\nabla v|^2 dz \Big)^\frac{1}{2}   \bigg(   \int\limits_{t_0-R_1^2}^{t_0}   \Big( \int\limits_{B(x_0,R_1)} |\hat v(x,t)|^\frac{2s}{2-s}  dx\Big)^\frac{2-s}{s} dt  \bigg)^\frac{1}{2} +$$$$
    +\, \frac{C}{(R-r)^2}   \frac{R^3}{(R_1  - R)^6} \Big( \int\limits_{Q(z_0,R_1)} |q |  \, dz \Big)^2.
 $$
Choosing $R = \frac{R_1 + r}{2}$
we have
\begin{equation}\label{e2}
\sup_{t \in (t_0-r^2, t_0)}  \; \frac{1}{4} \int\limits_{B(x_0,r)} |\hat v (x,t)|^2   dx + \int\limits_{Q(z_0,r)} |\nabla v|^2  dz \leq $$$$
    \leq C(s)(\Gamma (z_0, R_1)+1) R_1^{\frac{3}{s'}-1} \frac{R_1^{4} } {(R_1  - r)^4}  \Big( \int\limits_{Q(z_0,R_1)} |\nabla v|^2 dz \Big)^\frac{1}{2}   \times$$$$\times\bigg(   \int\limits_{t_0-R_1^2}^{t_0}   \Big( \int\limits_{B(x_0,R_1)} |\hat v(x,t)|^\frac{2s}{2-s}  dx\Big)^\frac{2-s}{s} dt  \bigg)^\frac{1}{2} +$$$$
    +\, C  \frac{R_1^3}{(R_1  - r)^8} \Big( \int\limits_{Q(z_0,R_1)} |q | \, dz \Big)^2,
\end{equation}
valid for any $R_1 > r$. The estimate (\ref{e2}) counterparts (2.13) of  \cite{Ser08}.

We will use (\ref{e2}) twofold. Before doing so, observe that the Sobolev and H\"older inequalities yield for 
\begin{equation}\label{l}
l= \frac{6s}{12-7s} \in \; ]1,2[
\end{equation}
the inequality
\begin{equation}\label{hs}
 \int\limits_{t_0-r^2}^{t_0}   \Big( \int\limits_{B(x_0,r)} |\hat v(x,t)|^\frac{2s}{2-s}  dx\Big)^\frac{2-s}{s} dt \leq $$$$\leq C(s) \, r^\frac{2(l-1)}{l} \sup_{t \in \,]t_0-r^2, t_0[} \Big( \int\limits_{B(x_0,r)} |\hat v (x,t)|^2   dx \Big)^\frac{1}{2} \Big( \int\limits_{Q(z_0,r)} |\nabla v|^l dz \Big)^\frac{1}{l},
\end{equation}
compare estimate of $I_*$ on p.335 of  of  \cite{Ser08} ($l$ is denoted as $r$ there).

Let us return to (\ref{e2}). Firstly, using the Poincar\'e-Sobolev inequality
$$
  \Big( \int\limits_{B(x_0,R_1)} |\hat v (x,t) |^\frac{2s}{2-s}  dx\Big)^\frac{2-s}{s} \leq CR_1^\frac{6-4s}{s} \int\limits_{B(x_0,R_1)} |\nabla v(x,t)|^2 dx,
$$
we estimate only the evolutionary part of (\ref{e2}) to get
\begin{equation}
\sup_{t \in \,]t_0-r^2, t_0[}  \,\frac{1}{4} \int\limits_{B(x_0,r)} |\hat v (x,t)|^2   dx \leq $$$$\leq
    C(s) (\Gamma (z_0, R_1)+1)   \frac{R_1^4} {(R_1  - r)^4} \Big( \int\limits_{Q(z_0,R_1)} |\nabla v |^2 dz \Big) \,+$$$$+\, C  \frac{R_1^3}{(R_1  - r)^8} \Big( \int\limits_{Q(z_0,R_1)} |q |  \, dz \Big)^2. 
\end{equation}
The above estimate in the {sup} term of (\ref{hs}) yields
\begin{equation}\label{e22}
  \int\limits_{t_0-r^2}^{t_0}   \Big( \int\limits_{B(x_0,r)} |\hat v(x,t)|^\frac{2s}{2-s}  dx\Big)^\frac{2-s}{s} dt   \leq$$$$
\leq C(s) (\Gamma^\frac{1}{2} (z_0, R_1)+1)  R_1^\frac{2(l-1)}{l} \Bigg( \frac{R_1^2} {(R_1  - r)^2} \Big( \int\limits_{Q(z_0,R_1)} |\nabla v|^2 dz \Big)^\frac{1}{2} +$$$$+ C  \frac{R_1^\frac{3}{2}}{(R_1  - r)^4} \Big( \int\limits_{Q(z_0,R_1)} |q| \, dz \Big) \Bigg)  \Big( \int\limits_{Q(z_0,R_1)} |\nabla v|^l dz \Big)^\frac{1}{l}.
\end{equation}

Secondly, let us rewrite (\ref{e2}) for any $r > \rho$ in place of $R_1 > r$, dropping this time the evolutionary term
\begin{equation}
 \int\limits_{Q(z_0,\rho)} |\nabla v|^2  dz \leq $$$$
    \leq C(s) (\Gamma (z_0, r)+1) r^{\frac{3}{s'} -1}  \frac{r^4} {(r  - \rho)^4} \Big( \int\limits_{Q(z_0,r)} |\nabla v|^2 dz \Big)^\frac{1}{2}  \times $$$$\times \bigg(   \int\limits_{t_0-r^2}^{t_0}   \Big( \int\limits_{B(x_0,r)} |\hat v(x,t)|^\frac{2s}{2-s}  dx\Big)^\frac{2-s}{s} dt  \bigg)^\frac{1}{2} +$$$$+\,C\,  \frac{r^3}{(r  - \rho)^8} \Big( \int\limits_{Q(z_0,r)} |q|  \,dz \Big)^2
\end{equation}
and use for its right-hand side (\ref{e22}). Together with choosing $r = \frac{R_1 + \rho }{2}$ we arrive at

\begin{equation}\label{e3}
 \int\limits_{Q(z_0,\rho)} |\nabla v|^2  dz \leq \frac{1}{2} \int\limits_{Q(z_0,R_1)} |\nabla v|^2 dz  \, +$$$$+\, C(s) (\Gamma^5  (z_0, R_1)+1) R_1^{4 (\frac{3}{s'}-  \frac{1}{l})} \frac{R_1^{20}} {(R_1  - \rho)^{20}} \Big( \int\limits_{Q(z_0,R_1)} |\nabla v|^l dz \Big)^\frac{2}{l} $$$$
    +C  \frac{R_1^3}{(R_1 - \rho)^8} \Big( \int\limits_{Q(z_0,R_1)} |q |  \,dz \Big)^2,
\end{equation}
valid for any $R_1>\rho$ such that $Q(x_0,R_1) \Subset Q_{T_1,T_2}$.

In order to deal with the first term on the right-hand side of (\ref{e3}), let us
use the following lemma.

\begin{lemma}\label{gld}
For $0 \le t_0 < t_1$, let $h :[t_0, t_1] \to \mathbb R$ be a nonnegative bounded function. Suppose that there exists $\delta \in [0,1)$ such that for any $t_0\leq t<s\leq  t_1$ the following inequality is valid:
$$
h(t) \le \delta h(s) + \sum_{i=1}^{N} \frac{A_i (s)}{(s-t)^{\alpha_i}},
$$
 in which $\alpha_i \ge 0$, $A_i: [t_0, t_1] \to \mathbb R$ is a bounded increasing function, $i=1, \dots N$. Then, there exists a constant $C_\delta$ such that for any $t_0\leq t<s\leq t_1$
$$
h(t) \le C_\delta \sum_{i=1}^{N} \frac{A_i (s)}{(s-t)^{\alpha_i}}.$$
\end{lemma}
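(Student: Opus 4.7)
This is the classical ``hole-filling'' iteration lemma (cf.\ Giaquinta's monograph); my plan is to iterate the hypothesis along a geometric sequence of intermediate thresholds filling the interval between $t$ and $s$, turning the contraction factor $\delta$ into a convergent geometric series.

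Fix $t<s$ in $[t_0,t_1]$, set $\alpha=\max_i\alpha_i$, and first choose $\tau\in(0,1)$ so that $\delta\tau^{-\alpha}<1$; such a $\tau$ exists because $\delta<1$ (and any $\tau\in(0,1)$ works if $\alpha=0$). Define $s_k:=t+(1-\tau^k)(s-t)$, so that $s_0=t$, $s_k\uparrow s$, and $s_{k+1}-s_k=(1-\tau)\tau^k(s-t)$. Applying the hypothesis to the pair $(s_k,s_{k+1})$ in place of $(t,s)$ and using that each $A_i$ is increasing to bound $A_i(s_{k+1})\le A_i(s)$, I obtain
$$h(s_k)\le \delta\,h(s_{k+1})+\sum_{i=1}^N\frac{A_i(s)}{(1-\tau)^{\alpha_i}\tau^{k\alpha_i}(s-t)^{\alpha_i}}.$$

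Iterating this recursion $n$ times starting from $s_0=t$ yields
$$h(t)\le \delta^n h(s_n)+\sum_{k=0}^{n-1}\delta^k\sum_{i=1}^N\frac{A_i(s)}{(1-\tau)^{\alpha_i}\tau^{k\alpha_i}(s-t)^{\alpha_i}}.$$
Boundedness of $h$ kills the remainder $\delta^n h(s_n)$ as $n\to\infty$, and the choice of $\tau$ makes each of the $N$ inner geometric series with ratio $\delta\tau^{-\alpha_i}\le\delta\tau^{-\alpha}<1$ convergent. Letting $n\to\infty$ and summing gives the desired bound with
$$C_\delta=\max_{1\le i\le N}\frac{1}{(1-\tau)^{\alpha_i}\bigl(1-\delta\tau^{-\alpha_i}\bigr)}.$$

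There is no real analytical obstacle. The only point requiring care is the joint choice of $\tau$, which has to tame all $N$ geometric ratios simultaneously; this is handled by first passing to the common exponent $\alpha=\max_i\alpha_i$. Boundedness of $h$ is used solely to discard the tail term, and monotonicity of the $A_i$ only to freeze their argument at $s$ throughout the iteration.
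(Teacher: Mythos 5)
Your argument is correct and is precisely the classical iteration of Giaquinta (p.~161 of \cite{GiaqBook}) that the paper itself invokes without writing out, with the only modification — freezing the increasing $A_i$ at the outer radius $s$ — being exactly the one the paper describes as "the same as in the classical case of constant $A_i$'s." The choice of $\tau$ via $\alpha=\max_i\alpha_i$ and the resulting geometric series are handled correctly, so there is nothing to add.
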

The proof is the same as in the classical case of constant $A_i$'s, see p. 161 of Giaquinta \cite{GiaqBook}. 

Invoking Lemma \ref{gld} with
$$
h(\rho) =  \int\limits_{Q_{\rho} (z_0)} |\nabla v|^2  dz,$$
$$
\begin{array}{ll}
A_1 (\rho) = C(s) (\Gamma^5  (z_0, \rho) +1) \rho^{20+ 4 (\frac{3}{s'}-  \frac{1}{l})} \big( \int\limits_{Q_{\rho} (z_0)} |\nabla v|^l dz \big)^\frac{2}{l}, & \alpha_1 = 20, \\
 A_2 (\rho) = \rho^3 \big( \int\limits_{Q_{\rho} (z_0)} |q |  \, dz \big)^2, & \alpha_2 = 8,
\end{array}
$$
we dispose of the first term on the right-hand side of (\ref{e3}). Consequently, choosing $R_1 = 2 \rho$ we have
$$
\frac 1{|Q(\rho)|}\int\limits_{Q(z_0,
\rho)} |\nabla v|^2  dz \leq$$$$\leq  C(s) ( \Gamma^5  (z_0, 2 \rho) +1) \rho^{4(-\frac{1}{p}  +\frac{3}{s'})} \rho^{-5} \rho^\frac{10}{p} \Big(\frac 1{|Q(2\rho)|} \int\limits_{Q(z_0,2\rho)} |\nabla v|^l dz \Big)^\frac{2}{l}+$$$$
 +\, C \Big( \frac 1{|Q(2\rho)|}\int\limits_{Q (z_0,2\rho)} |q|  \,dz \Big)^2,
 $$
which in tandem with (\ref{l}) and $s \in (1,6/5)$ implies (\ref{rhin}). Proposition  \ref{rh} is proven.

\setcounter{equation}{0}
\section{Proof of Theorem \ref{log}} 
For simplicity of the Calder\'on-Zygmund argument below, let us use in what follows both the usual (parabolic) cylinders $Q(z_0,R) = B(x_0,R) \times ]t_0 - R^2, t_0[$ and the related (parabolic) cubes $C(z_0,R) =  \{\max_{i=1,2,3} |x^i-x^i_0|<R\} \times ]t_0 - R^2, t_0[$.

Let us introduce
\begin{definition}[Local maximal function]\label{mq}
Let $G \subset \mathbb R^d$ be a fixed open set and $f \in L_1 (G)$. The {local maximal function $M_{G}$} is given by
$$
(M_{G} f)(z) = \sup\Big\{
(|f|)_C : \; {\text{cubes } \;C \;\text{ such that }\; z \in C \subset G} \Big\},
$$ where $(g)_\omega$ denotes the mean value of $g$ in $\omega$. 
\end{definition}
The following is true
\begin{lemma}\label{mf}
Let $C_0$ be a parabolic cube. Then
$$
2^{-9} \int\limits_{C_0} (M_{C_0} f) \,dz \leq \int\limits_{C_0} |f| \log \Big( e + \frac{|f|}{ (|f|)_{C_0}} \Big) dz \le 2^9 \int\limits_{C_0} (M_{C_0} f) \,dz.
$$
\end{lemma}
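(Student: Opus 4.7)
The plan is to adapt the classical Stein equivalence between the local Hardy--Littlewood maximal function and the $L\log L$ norm to the setting of parabolic cubes. I would first observe two elementary facts: since $C_0$ is itself admissible in Definition \ref{mq} at every point $z\in C_0$, one has $M_{C_0}f(z)\ge (|f|)_{C_0}=:\alpha$ for all such $z$; and parabolic cubes have a natural dyadic refinement in which halving $R$ splits a parent cube into $2^3\cdot 2^2=2^5$ children of volume ratio $2^5$ (eight spatial subcubes of side $R$ crossed with four temporal intervals of length $R^2/4$). This yields a standard dyadic Calder\'on--Zygmund apparatus on $C_0$ with doubling constant $2^5$.

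For the upper bound $\int_{C_0} |f|\log(e+|f|/\alpha)\,dz\le 2^9 \int_{C_0} M_{C_0}f\,dz$, I would use the elementary estimate $\log(e+t)\le 2+\log^+ t$. The first summand contributes $2\int|f|=2\alpha|C_0|\le 2\int M_{C_0}f$ by the observation $M_{C_0}f\ge\alpha$. For the $\log^+$ piece, Fubini gives
$$\int_{C_0} |f|\log^+(|f|/\alpha)\,dz=\int_\alpha^\infty\frac{1}{\lambda}\int_{\{|f|>\lambda\}}|f|\,dz\,d\lambda.$$
For each $\lambda>\alpha$ I would apply the dyadic parabolic Calder\'on--Zygmund stopping inside $C_0$ at level $\lambda$: since $(|f|)_{C_0}=\alpha<\lambda$ this produces maximal subcubes $\{Q_j\}$ with $\lambda<(|f|)_{Q_j}\le 2^5\lambda$ and $\{|f|>\lambda\}\subset\bigcup_j Q_j\subset\{M_{C_0}f>\lambda\}$ up to null sets (the first inclusion by Lebesgue differentiation applied to the dyadic basis off $\bigcup Q_j$, the second because the dyadic $Q_j$ are themselves admissible in Definition \ref{mq}). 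Consequently
$$\int_{\{|f|>\lambda\}}|f|\,dz\le\sum_j(|f|)_{Q_j}|Q_j|\le 2^5\lambda\,|\{M_{C_0}f>\lambda\}|,$$
and a layer-cake integration in $\lambda$ yields $\int|f|\log^+(|f|/\alpha)\le 2^5\int M_{C_0}f$.

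For the lower bound $2^{-9}\int_{C_0} M_{C_0}f\le\int_{C_0}|f|\log(e+|f|/\alpha)$, I would write $\int M_{C_0}f=\int_0^\infty|\{M_{C_0}f>\lambda\}|\,d\lambda$ and split at $\lambda=\alpha$. The low part is at most $\alpha|C_0|=\int|f|\le\int|f|\log(e+|f|/\alpha)$, since $\log(e+\cdot)\ge 1$. For $\lambda>\alpha$, I would use the classical truncation $f=f\mathbf{1}_{\{|f|\le\lambda/2\}}+f\mathbf{1}_{\{|f|>\lambda/2\}}$: the first piece has maximal function everywhere at most $\lambda/2$, so $\{M_{C_0}f>\lambda\}\subset\{M_{C_0}(f\mathbf{1}_{\{|f|>\lambda/2\}})>\lambda/2\}$. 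The weak-type $(1,1)$ estimate for $M_{C_0}$, obtained from the parabolic Vitali covering lemma (or from the dyadic sunrise lemma combined with the comparison to the dyadic local maximal function, losing a factor $2^5$), gives $|\{M_{C_0}f>\lambda\}|\le(A/\lambda)\int_{\{|f|>\lambda/2\}}|f|\,dz$ for an absolute constant $A$. Integrating $d\lambda$ from $\alpha$ to $\infty$ and applying Fubini converts the double integral into $A\int_{\{2|f|>\alpha\}}|f|\log(2|f|/\alpha)\,dz$, which is controlled by a universal multiple of $\int|f|\log(e+|f|/\alpha)\,dz$ via $\log(e+t)\ge\tfrac{1}{2}(1+\log^+ t)$.

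The only obstacle is bookkeeping of numerical constants: the doubling factor $2^5$ of parabolic dyadic refinement, the weak-type constant $A$ for $M_{C_0}$, and algebraic slippages between $\log(e+\cdot)$, $\log^+(\cdot)$, and $\log(2\cdot/\alpha)$. All these combine comfortably within the factor $2^9$ stated in the lemma; no new ideas are needed beyond the standard dyadic Calder\'on--Zygmund framework in the parabolic setting. The explicit constant $2^9$ is a safe overestimate and can be traced but is not particularly illuminating.
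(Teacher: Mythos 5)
Your argument is essentially the paper's own proof: both hinge on a Calder\'on--Zygmund stopping-time decomposition of $C_0$ into parabolic sub-cubes, which yields the reverse weak-type bound $|\{M_{C_0}f>\lambda\}|\ge c\,\lambda^{-1}\int_{\{|f|>\lambda\}}|f|\,dz$ for $\lambda\ge(|f|)_{C_0}$, integrated in $\lambda$ via Fubini/Tonelli, while the converse direction comes from the Vitali weak-type $(1,1)$ estimate exactly as you propose. The only cosmetic differences are that the paper splits the domain at the level $\Lambda=(|f|)_{C_0}$ instead of using $\log(e+t)\le 2+\log^{+}t$, and records the parabolic CZ constant as $2^{8}$ rather than your sharper $2^{5}$.
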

This result is classical in the case of the centred maximal function $M$ on $\mathbb R^d$,
 under an additional restriction that $f$ is compactly supported, see Theorem 1 of Stein \cite{Stein69}.
Lifting the compact support assumption by using the local maximal function $M_{G}$ seems virtually untapped in applications for PDEs, despite being apparently useful (in our case, trying to produce compactly supported functions, one may try to e.g. double-localise the estimates, which results in a scaling mismatch on the whole space). A range of results closely related to Lemma \ref{mf} can be found in works by Iwaniec with coauthors, e.g. \cite{Iwa4, Iwa2, Iwa1, Iwa3}. Since these papers are inspired however more by geometry-related considerations, the needed by us result seems not to be explicitly stated there. Let us therefore present the proof of Lemma \ref{mf}, emphasising that it was essentially provided to us by Piotr Haj\l asz. To this end we need the following Calder\'on-Zygmund decomposition on cubes

\begin{lemma}
\label{cz}
Let $C_0$ be a parabolic cube and $f \in L_1 (C_0)$. Fix any $t \geq (|f|)_{C_0}$. Then there exists sequence of pairwise disjoint parabolic cubes $\{C^i\}$, $C^i \subset C_0$, ${i \in \mathbb N}$ such that
\begin{subequations}
\begin{equation}\label{sta}
|f| \le t \quad \text{almost everywhere on } \;  C_0\setminus  \bigcup_{i \in \mathbb N} C^i
\end{equation}    
\begin{equation}\label{stc}
t <  (|f|)_{C^i} \le 2^8 t
\end{equation}
\end{subequations}
\end{lemma}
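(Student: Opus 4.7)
The plan is a standard stopping-time Calder\'on--Zygmund construction, adapted to the parabolic scaling.

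First I set up a dyadic subdivision rule for parabolic cubes. Given $C(z,R)$, split each of the three spatial coordinates in two and partition the time interval $(t-R^2, t)$ into four equal subintervals of length $R^2/4$. This produces $2^3 \cdot 4 = 32$ pairwise disjoint parabolic subcubes of type $C(z', R/2)$, each of volume $|C(z,R)|/32$, whose closures cover $C(z,R)$. Crucially, the children are themselves parabolic cubes (with respect to the parameter $R/2$), so the procedure can be iterated.

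Next I run the stopping-time selection on $C_0$. Since $(|f|)_{C_0} \leq t$ by hypothesis, $C_0$ is itself not selected; instead I subdivide it into its $32$ children and, for each child $C'$, either add $C'$ to the family $\{C^i\}$ if $(|f|)_{C'} > t$, or subdivide $C'$ further and iterate. The resulting family is pairwise disjoint by construction, and at most countable because at each generation only finitely many cubes can be selected (their total measure is bounded by $t^{-1}\|f\|_{L_1(C_0)}$).

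The upper bound in (\ref{stc}) follows from a one-step comparison with the parent in the subdivision tree: when $C^i$ is selected, its parent $\tilde C^i$ was not, so $(|f|)_{\tilde C^i} \leq t$, and since $|\tilde C^i| = 32\, |C^i|$,
$$
t < (|f|)_{C^i} \leq \frac{|\tilde C^i|}{|C^i|}\, (|f|)_{\tilde C^i} \leq 32\, t \leq 2^8\, t.
$$
For (\ref{sta}), any $z \in C_0 \setminus \bigcup_i C^i$ belongs to a nested sequence of parabolic cubes produced by the subdivision, each unselected and therefore satisfying $(|f|)_{C} \leq t$; the diameters of these cubes shrink to $0$. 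Parabolic cubes satisfy the doubling property (compare \cite{Stein93}, \S I.1), so the Lebesgue differentiation theorem applies along this family and yields $|f(z)| \leq t$ at almost every such $z$.

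There is no real conceptual obstacle: the argument is textbook Calder\'on--Zygmund stopping time. The only care needed is in the bookkeeping of the parabolic scaling, so that the subdivision produces genuine parabolic subcubes with uniformly bounded parent-to-child volume ratio ($2^5=32$, comfortably inside the stated $2^8$), and in invoking Lebesgue differentiation along parabolic, rather than Euclidean, cubes.
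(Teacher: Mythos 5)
Your argument is correct and is exactly the classical Calder\'on--Zygmund stopping-time construction that the paper invokes by citing Stein \cite{Stein70}, \S I.3.2, with the parabolic subdivision into $2^3\cdot 4=32$ children supplying the parent-to-child volume ratio. Your constant $2^5=32$ in (\ref{stc}) is consistent with (indeed sharper than) the stated $2^8$, which is precisely the ``bigger constant related to the parabolicity of cubes'' the paper alludes to.
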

The only difference from the classical proof as in Stein \cite{Stein70} \S I.3.2 is a bigger constant of (\ref{stc}), related to the parabolicity of cubes.

\begin{proof}[Proof of Lemma \ref{mf}]
Let us define $E_t = \{z \in C_0: (M_{C_0} f) (z)  > t \}$. In the setting of Lemma \ref{cz}, the left inequality of  (\ref{stc}) implies $ \bigcup\limits_{i \in \mathbb N} C^i \subset E_t$. Hence
$$
\mu (E_t) \geq  \sum\limits_{i \in \mathbb N} \mu (  C^i )  \ge  2^{-8} \sum\limits_{i \in \mathbb N}  \frac{1}{t} \int\limits_{C^i} |f| \,dz = 2^{-8}   \frac{1}{t} \int\limits_{\bigcup\limits_{i \in \mathbb N} C^i} |f| \,dz,
$$
with the latter inequality given by the right inequality of (\ref{stc}). 
Since Lemma \ref{cz} implies also that $\bigcup\limits_{i \in \mathbb N} C^i  \supset \{z \in C_0:\, |f| > t \}$ up to  a zero-measure set (considering (\ref{sta})  and complements), we have in tandem with the above inequality that 
\begin{equation}\label{l3c}
\mu (E_t)  \ge  2^{-8}  \frac{1}{t} \int\limits_{\{z \in C_0:\, |f| > t \}} |f|\, dz,
\end{equation}
valid for any $t \geq (|f|)_{C_0}  =: \Lambda$. It holds
$$
2^{8} \int\limits_{C_0} M_{C_0} f dz= 2^{8} \int\limits_0^\infty \mu (E_t) \,dt \geq   \Lambda  \mu (E_{\Lambda})   +  \int\limits_{ \Lambda}^\infty  \mu (E_t) \,dt  \geq$$$$\geq \int\limits_{\{z \in C_0:\, |f| >  \Lambda \}} |f| \, dz+     \int_{\Lambda} ^\infty \frac{1}{t} \Big( \int\limits_{\{z \in C_0: |f| > t \}} |f| \, dz \Big) \,dt,
$$
see (\ref{l3c}) for the last inequality. We estimate the last integral above with help of the Tonelli theorem and  find that
$$
2^{8} \int\limits_{C_0} M_{C_0} f dz \geq \int\limits_{\{z \in C_0: \,|f| > \Lambda \}} |f| \, dz +  \int\limits_{\{z \in C_0: \,|f| > \Lambda \}}  |f| \log \frac {|f|}{\Lambda} \, dz \geq$$$$\geq   2^{-1} \int_{\{z \in C_0:\, |f| > \Lambda \}}  |f| \log \Big(e+ \frac {|f|}{\Lambda} \Big)\, dz.
$$
Since also 
$$
2^{9} \int\limits_{C_0} M_{C_0} f dz \geq 2^{9} \int\limits_{\{z \in C_0:\, |f| \le \Lambda \}}  |f| \, dz \geq $$$$\geq \int\limits_{\{z \in C_0:\, |f| \le \Lambda \}}  |f| \log (e+1) \, dz \geq \int\limits_{\{z \in C_0:\, |f| \leq \Lambda \}}  |f| \log \Big(e+ \frac {|f|}{\Lambda} \Big)\, dz,
$$
we have the right (less standard) inequality of the thesis. The remaining left inequality follows in fact from the original \cite{Stein69}. Indeed, also for the local maximal function, one has the usual weak-type estimate (i.e. a practical reverse to (\ref{l3c}))
$$
\mu (E_t)  \le  2^{8}  \frac{1}{t} \int_{\{z \in C_0:\, |f| > \frac{t}{2} \}} |f|,
$$
by the Vitali covering of $E_t$. Along the previous lines utilising (\ref{l3c}), with  inequalities  reversed, we prove now the remaining left inequality of  Lemma \ref{mf} (in fact, not needed for our further purposes).
\end{proof}

Let us return to the proof of Theorem \ref{log}. We fix a parabolic cube $C_1=C(z_1,R_1)$ such that $C_1'=C(z_1,3R_1) \Subset Q_{T_1,T_2}$. Proposition \ref{rh}, rewritten for cubes, yields for $\Gamma_1 = \Gamma (z_0, 2 R_1)$
$$
\frac 1{|C(\rho)|}\int\limits_{C(z_0,\rho)} |\nabla v|^2  dz \leq c(l) (\Gamma^5_{1} +1) \bigg(\frac 1{|C(2\sqrt 2\rho)|} \int\limits_{C(z_0,2\sqrt 2\rho)} |\nabla v |^l dz \bigg)^\frac{2}{l} +$$$$+\, c\, \bigg( \frac 1{|C(2\sqrt 2\rho)|}\int\limits_{C(z_0,2\sqrt{2} \rho)} |q |  \, dz \bigg)^2
$$
for all $C(z_0,\rho)\subset C_1$.

Since all the domains of integration of the right-hand side sit in $C(z_1,2 \sqrt{2} R_1)$, we can introduce there into integrals a smooth function $\psi$ such that $\psi \equiv 1$ on $C(z_1,2 \sqrt{2} R_1)$ and  $\psi \equiv 0$ outside $C_1' $. Hence
$$
\frac 1{|C(\rho)|}\int\limits_{C(z_0,\rho)} |\nabla v|^2  dz \leq c(l) (\Gamma^5_{1} +1) \bigg(\frac 1{|C(2\sqrt 2\rho)|} \int\limits_{C(z_0,2\sqrt 2\rho)} |\nabla v |^l\psi \,dz \bigg)^\frac{2}{l} +$$$$+\, c\, \bigg( \frac 1{|C(2\sqrt 2\rho)|}\int\limits_{C(z_0,2\sqrt{2} \rho)} |q | \psi \, dz \bigg)^2
$$
for all $C(z_0,\rho)\subset C_1$.

Recalling Definiton \ref{mq} we have then
$$
M_{C_{1}} (|\nabla v|^2) (z)  \leq c(l) (\Gamma^5_{1} +1)  M^\frac{2}{l}_{C(z_1,2 \sqrt{2} R_1)} (|\nabla v|^l \psi) (z) +c M^2_{C(z_1,2  \sqrt{2} R_1)} (|q|  \psi) (z) $$$$
 \leq c(l) (\Gamma^5_1 +1)  M^\frac{2}{l}_{\mathbb R^4} (|\nabla v|^l \psi) (z) + c M^2_{\mathbb R^4} (|q|  \psi) (z)
$$
and consequently
$$
\int\limits_{C_{1}} M_{C_{1}} (|\nabla v|^2) \, dz \leq c(l) (\Gamma^5_{1} +1) \int\limits_{\mathbb R^4}  M^\frac{2}{l}_{\mathbb R^4} (|\nabla v|^l \psi) \, dz +\, c\int\limits_{\mathbb R^4}   M^2_{\mathbb R^4} (|q|  \psi)  \, dz.
$$
Observe that $M_{\mathbb R^4}$ is the usual non-centred maximal function with respect to parabolic cubes. Since it enjoys the strong $L_p$-property, compare \cite{Stein93} \S I.3.1, the above inequality implies
$$
\int\limits_{C_{1}} M_{C_{1}} (|\nabla v|^2) \, dz \leq c(l) (\Gamma^5_{1} +1) \int\limits_{\mathbb R^4}  |\nabla v|^2 \psi^\frac{2}{l}  \, dz + c \int\limits_{\mathbb R^4} |q|^2  \psi^2 \, dz  \leq$$$$\leq c(l) (\Gamma^5_{1} +1) \int\limits_{C_1'}  |\nabla v|^2 \, dz + c\int\limits_{C_1'}  |q|^2 \, dz< \infty,
$$
hence  Lemma \ref{mf} yields
$$
\int\limits_{C_{1}} |\nabla v| \log \Big( e + \frac{|\nabla v|}{ (|\nabla v|)_{C_1}} \Big) dz \leq 2^9 c(l) (\Gamma^5_{1} +1) \int\limits_{C_{1}'}  |\nabla v|^2 \, dz+ 2^9 c \int\limits_{C_{1}'} |q|^2 \, dz.
$$
Returning to parabolic cylinders gives Theorem \ref{log}.

\vskip 5mm
\noindent
{\bf Acknowledgements}
The authors are indebted to Piotr Haj\l asz in relation to local maximal functions.

\noindent
J. Burczak was supported by MNiSW "Mobilno\'s\'c Plus" grant \\1289/MOB/IV/2015/0. \\
G. Seregin was supported by the grant RFBR 17-01-00099-a.

\setcounter{equation}{0}
\section{Appendix I} 
Here, we are going to prove (\ref{MazVer}).
Indeed, we have 
$$(D\nabla u):\nabla v=u_{i,l}d_{jl}v_{i,j}=u_{i,l}\epsilon_{jls}v_{i,j}\omega_s.$$
Since $\omega$ is an $BMO$ function, it suffices to show that for any $s=1,2,3$, the function 
$$x\mapsto u_{i,l}(x)\epsilon_{jls}v_{i,j}(x)
$$
belongs to the Hardy space and to  find the corresponding estimates, compare e.g. \S VII.3 of \cite{Stein70} about duality between Hardy and  $BMO$ spaces. To this end, let us fix a standard mollifier $\Phi_\varrho$
and consider the function 
$$H_s(x):=\sup\limits_{\varrho>0}|(\Phi_\varrho\star (u_{i,l}\epsilon_{jls}v_{i,j}))(x)|.$$
Taking into account properties of the Levi-Civita tensor, we have 
$$H_s(x)=\sup\limits_{\varrho>0}|(\Phi_\varrho\star (\overline u_{i}\epsilon_{jls}v_{i,j})_{,l})(x)|,$$
where $\overline u=u-[u]_{B(x,\varrho)}$. After integration by parts and applying the estimate $|\nabla \Phi_\varrho|\leq c\varrho^{-4}$, we find
$$H_s(x)\leq \sup\limits_{\varrho>0}
\frac c\varrho
\frac 1{|B(\varrho)|}
\int\limits_{B(x,\varrho)}|\overline u||\nabla v|dy
\leq \frac c\varrho\bigg(\frac 1{|B(\varrho)|}
\int\limits_{B(x,\varrho)}|\overline u|^3dy\bigg)^\frac 13\times$$$$\times \bigg(\frac 1{|B(\varrho)|}
\int\limits_{B(x,\varrho)}|\nabla v|^\frac 32dy\bigg)^\frac 23.$$
Now, we can use Poincar\'e-Sobolev inequality and pass to the standard centred (Hardy-Littlewood) maximal functions, denoted by $M$, thus obtaining 
$$H_s(x)\leq c\sup\limits_{\varrho>0}\bigg(\frac 1{|B(\varrho)|}
\int\limits_{B(x,\varrho)}|\nabla u|^\frac 32dy\bigg)^\frac 23 \bigg(\frac 1{|B(\varrho)|}
\int\limits_{B(x,\varrho)}|\nabla v|^\frac 32dy\bigg)^\frac 23\leq $$
$$\leq cM^\frac 23 ({|\nabla u|^\frac 32}) (x)M^\frac 23({|\nabla v|^\frac 32})(x).
$$
Integration over $\mathbb R^3$, together with $L_p$-estimates   for maximal functions gives us
$$\|H_s\|_1\leq c\Big(\int\limits_{\mathbb R^3}M^\frac 43 ({|\nabla u|^\frac 32})(x)dx\Big)^\frac 12
\Big(\int\limits_{\mathbb R^3}M^\frac 43({|\nabla v|^\frac 32})(x)dx\Big)^\frac 12\leq c\|\nabla u\|_2\|\nabla v\|_2$$
for any $s=1,2,3$. Therefore, by definition, for any $s=1,2,3$, $H_s$ belongs to the Hardy space. Now, estimate (\ref{MazVer}) follows from duality between Hardy and $BMO$ spaces.


\setcounter{equation}{0}
\section{Appendix II}
Here we state an existence theorem for the Cauchy problem for system (\ref{os1}), compare Remark \ref{1strem}. To this end we need to introduce certain energy spaces. 
First, we 
let 
$$C^\infty_{0,0}(\Omega)=\{v\in C^\infty_0(\Omega):\,\,{\rm div} v=0\}$$ and then

$${\stackrel{\circ} J}_p(\Omega)=[C^\infty_{0,0}(\Omega)]^{L_p(\Omega)},$$
${\stackrel{\circ} J}{_p^1}(\Omega)$ is the closure of the set $C^\infty_{0,0}(\Omega)$
with respect to the semi-norm
$$|v|_{p,1,\Omega}=\Big(\int\limits_{\Omega}|\nabla v|^pdx\Big)^\frac 1p.$$
If $\Omega=\mathbb R^3$, we shall drop $\Omega$ in the notation of the spaces. We denote $\mathbb R^3 \times \mathbb R_+$ briefly by $Q_+$.
\begin{theorem}
	\label{Cauchy} Given a skew symmetric tensor $b\in L_\infty(BMO)$ and initial velocity $ u_0\in {\stackrel{\circ} J}_2$, there exists a unique pair $v$ and $q$ satisfying the following properties:
	\begin{itemize}
\item[(i)] 	$v\in L_\infty(0,\infty;{\stackrel{\circ} J}_2)\cap L_2(0,\infty;{\stackrel{\circ} J}{_2^1}), \qquad q\in L_2(Q_+);$

\item[(ii)] 	$v$ and $q$ satisfy the problem (\ref{os1}) in the sense of distributions;
	
\item[(iii)] 
	the function
	$$t\mapsto \int\limits_{\mathbb R^3}v(x,t)\cdot w(x)dx
	$$ is continuous at any $t\geq0$ for each $w\in L_2(\mathbb  R^3)$;
\item[(iv)] 	$\|v(\cdot,t)-u_0(\cdot)\|_2\to 0$ as $t\to0$;
	
\item[(v)] 
	for all $t\geq0$
	$$\frac 12 \int\limits_{\mathbb R^3}|v(x,t)|^2dx+\int\limits^t_0\int\limits_{\mathbb R^3}|\nabla v|^2dxdt'\leq \frac 12 \int\limits_{\mathbb R^3}|u_0|^2dx;$$
	
\item[(vi)] 
	for all $t\geq0$
	$$\int\limits_{\mathbb R^3}\varphi|v(x,t)|^2dx+2\int\limits^t_0\int\limits_{\mathbb R^3}\varphi|\nabla v|^2dxdt'= $$$$= \int\limits^t_0\int\limits_{\mathbb R^3}
	(|v|^2(\partial_t+\Delta) \varphi-2D\nabla v:v\otimes \nabla \varphi+2qv\cdot\nabla\varphi)dxdt' $$
	for any non-negative $\varphi\in C^\infty_0(Q_+)$.
	\end{itemize}
	\end{theorem}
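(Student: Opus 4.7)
My plan is to construct $v$ by a Faedo-Galerkin scheme tailored to the Maz'ya--Verbitsky estimate (\ref{MazVer}), recover the pressure $q$ via Riesz transforms, and obtain uniqueness from the basic energy identity applied to the difference of two solutions. The skew-symmetry of $d$ drives the entire argument: it kills the drift term in the natural energy estimate and in the uniqueness computation, while (\ref{MazVer}) provides the continuity of the associated bilinear form needed at every passage to the limit.

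Pick an orthonormal $L_2$-basis $\{w_k\} \subset C^\infty_{0,0}(\mathbb{R}^3)$ of ${\stackrel{\circ} J}_2$ that is also orthogonal in ${\stackrel{\circ} J}{_2^1}$, and look for $v^N(x,t) = \sum_{k=1}^N c_k^N(t) w_k(x)$ satisfying
$$(\partial_t v^N, w_k) + (\nabla v^N, \nabla w_k) + (D\nabla v^N, \nabla w_k) = 0, \qquad v^N(0) = P_N u_0.$$
By (\ref{MazVer}), the bilinear form on the left is bounded on ${\stackrel{\circ} J}{_2^1}$, so this is a linear ODE system for $c^N(t)$ with continuous coefficients, hence globally solvable. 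Testing with $v^N$ itself, the drift contribution $\int d_{jl} v^N_{i,l} v^N_{i,j}\, dx$ vanishes by antisymmetry of $d$ in $(j,l)$ against the $j\leftrightarrow l$ symmetry of $v^N_{i,l} v^N_{i,j}$, leaving
$$\|v^N(t)\|_2^2 + 2\int_0^t \|\nabla v^N\|_2^2\, ds = \|P_N u_0\|_2^2 \leq \|u_0\|_2^2.$$
Together with (\ref{MazVer}), this bounds $\partial_t v^N$ in $L_2(0,T;({\stackrel{\circ} J}{_2^1})^*)$ uniformly, and Aubin--Lions yields a subsequence converging weak-$*$ in $L_\infty({\stackrel{\circ} J}_2)$, weakly in $L_2({\stackrel{\circ} J}{_2^1})$, and strongly in $L_2((0,T) \times B_R)$ for every $R$ to some $v$ solving the weak form of (\ref{os1}) against divergence-free test functions.

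For the pressure, taking divergence of (\ref{os1}) and using ${\rm div}\,v = 0$ gives $-\Delta q = \partial_i\partial_j(d_{jl} v_{i,l})$, so I set $q := R_i R_j (d_{jl} v_{i,l})$ with $R_i$ the Riesz transforms. The a priori bound $\|q(\cdot,t)\|_2 \leq C\|d(\cdot,t)\|_{BMO}\|\nabla v(\cdot,t)\|_2$ is obtained by duality: for $\phi \in L_2$,
$$\int q\phi\,dx = \int d_{jl} v_{i,l}\, R_i R_j\phi\,dx,$$
and the right-hand side is estimated by a Hardy-$BMO$ pairing analogous to the one in Appendix I, applied after writing $d_{jl} = \epsilon_{jls}\omega_s$ and exploiting the $(i,j)$-symmetry of $R_iR_j$ together with ${\rm div}\,v = 0$ to realize the relevant product in the Hardy space $H^1$. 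Integration in time then gives $q \in L_2(Q_+)$ uniformly along the Galerkin sequence, and the weak limit yields (i), (ii), and the global energy inequality (v).

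Properties (iii), (iv), (vi) and uniqueness follow in a standard way once the above is in place. Weak continuity (iii) comes from $\partial_t v \in L_2(({\stackrel{\circ} J}{_2^1})^*)$; strong continuity at $t=0$ (iv) is obtained by combining weak continuity with the energy inequality; the local identity (vi) is obtained by multiplying the equation by $\varphi v$, using (\ref{MazVer}) for the drift term, and invoking strong $L_2^{\rm loc}$ convergence to pass to the Galerkin limit. Uniqueness follows because the difference $w = v_1 - v_2$ of two solutions is divergence-free with zero initial datum, its drift term cancels by the same skew-symmetric argument, and the energy identity applied to $w$ forces $w \equiv 0$. I expect the hardest step to be the pressure bound $\|q\|_2 \lesssim \|d\|_{BMO}\|\nabla v\|_2$: since $d\,\nabla v$ is not itself in $L_2$, the estimate must be routed through the Hardy-$BMO$ duality of Appendix I, which requires a careful rewriting of the test integral exploiting both the divergence-free structure of $v$ and the Levi-Civita form of $d$.
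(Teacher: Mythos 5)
Your proposal is correct and follows essentially the same route the paper indicates: the energy method with the drift term killed by skew-symmetry, the form-boundedness estimate (\ref{MazVer}) to control the bilinear form, and the pressure bound $\|q\|_{2}\le c\|d\|_{L_\infty(BMO)}\|\nabla v\|_{2}$ obtained by applying the Appendix~I Hardy--$BMO$ duality to $-\Delta q={\rm div}\,(D\nabla v)$, with the remaining Galerkin details being the ``standard'' part the paper omits. You correctly identify the pressure estimate as the only genuinely non-routine step, which is precisely the one point the paper's own proof sketch spells out.
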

The proof of the theorem relies essentially  on the estimate (\ref{MazVer}). Observe that it is also applicable to the pressure equation
$$-\Delta q={\rm div}\,(D\nabla v)
$$	
hence gives the estimate  for the pressure
$$\|q\|_{2,Q_+}\leq c \|d\|_{L_\infty(BMO)}\|\nabla v\|_{2,Q_+}.
$$
Further details are standard.

\end{document}